\newtheorem{thm}{Theorem}[section]
\newtheorem{lem}[thm]{Lemma}
\newtheorem{cor}[thm]{Corollary}
\newtheorem{prop}[thm]{Proposition}
\theoremstyle{remark}
\newtheorem{rmk}[thm]{Remark}
\numberwithin{equation}{section}
\newcommand{\R}{\mathbb{R}}
\newcommand{\p}{\partial}
\newcommand{\Diff}{\textup{Diff}}
\newcommand{\Aut}{\textup{Aut}}
\newcommand{\be}{\begin{equation}}
\newcommand{\ee}{\end{equation}}
\newcommand{\bd}{\begin{displaymath}}
\newcommand{\ed}{\end{displaymath}}
     \title[Contractibility results for certain spaces of Riemannian metrics on the disc]{Contractibility results for certain spaces of Riemannian metrics on the disc}
     \author{Alessandro Carlotto and Damin Wu}
     \address{ \noindent Alessandro Carlotto: 
     	\newline ETH D-Math, R\"amistrasse 101, 8092 Z\"urich, Switzerland 
     	\newline IAS, 1 Einstein drive, 08540 Princeton, United States of America
     	\newline
     	 \textit{E-mail address: alessandro.carlotto@math.ethz.ch, alessandro.carlotto@ias.edu} 
     	 \newline \newline \indent Damin Wu: 
     	 \newline University of Connecticut - Department of Mathematics, 341 Mansfield Road U 1009, 06269 Connecticut, United States of America
     	 \newline \textit{E-mail address: damin.wu@uconn.edu} }
\begin{document}
     	
     	\begin{abstract}
     	We provide a general contractibility criterion for subsets of Riemannian metrics on the disc. For instance, this result applies to the space of metrics that have positive Gauss curvature and make the boundary circle convex (or geodesic). The same conclusion is not known in any dimension $n\geq 3$, and (by analogy with the closed case) is actually expected to be false for many values of $n\geq 4$. 
     	\end{abstract}
     
      	\maketitle

   	\begin{spacing}{1.04}
							
	\section{Introduction} \label{sec:intro}
	
	Let $M$ be a smooth, compact manifold of dimension $n\geq 2$, without boundary.
	Two important questions in Riemannian Geometry are whether $M$ supports any metric of positive scalar curvature and, if that is the case, whether one can characterize the homotopy type of the subset $\mathfrak{R}^{+}(M)\subset \mathfrak{R}(M)$ consisting of all such metrics. In spite of many remarkable advances, these questions are still far from being fully resolved, except for the case of surfaces ($n=2$) and of 3-manifolds ($n=3$), the latter being the object of the very recent work by Bamler-Kleiner \cite{BK19}.
	
	When $n\geq 3$, it is well-known that \emph{any} manifold $M$ as above can be endowed with metrics of negative scalar curvature (cf. \cite{KW75a, KW75b} for a more precise characterization), while pioneering works by Schoen-Yau \cite{SY79a, SY79b,SY82} and Gromov-Lawson \cite{GL80a,GL83} imply that, at the opposite end of the spectrum, manifolds that do not admit any metric of positive scalar curvature exist in abundance. The simplest such example is provided by the three-dimensional torus $S^1\times S^1\times S^1$, which is in fact a significant result as it implies the positive mass theorem for three-dimensional asymptotically flat manifolds (this is the compactification approach followed in \cite{SY17}). A complete description of those three-manifolds for which $\mathfrak{R}^{+}(M)\neq\emptyset$ follows, when $n=3$, by Perelman's papers on the Ricci flow with surgery \cite{Per02, Per03a, Per03b}, while for $n\geq 5$ and $M$ simply connected our knowledge relies on the combination of outstanding work by Gromov-Lawson \cite{GL80b} and Stolz \cite{Sto92}: $\mathfrak{R}^{+}(M)\neq\emptyset$ if and only if either $M$ does not admit any spin structure or, in case of spin manifolds, if $\alpha(M)=0$. However, for $n\geq 4$, we are far from a complete understanding of the topology of $\mathfrak{R}^{+}(M)$ (for partial, but often striking achievements see e.g. \cite{Hit74, Car88, KS93, BG96, Rub01, BHSW10, HSS14, BER17} among others). These works provide simple criteria (of dimensional character, and/or on the topology of $M$) implying that $\mathfrak{R}^{+}(M)$ is often \emph{not even path-connected}, which contrasts with the main theorem in \cite{Mar12} for $S^3$ (and, more generally, with the results in \cite{BK19}).
	
	For closed surfaces, the Gauss-Bonnet theorem gives at once that $\mathfrak{R}^{+}(M)\neq\emptyset$ if and only if $\chi(M)>0$, that is if $M \cong S^2$ or $\chi\cong \mathbb{R}\mathbb{P}^2$. The topology of this space of metrics was first studied by Weyl \cite{Wey15} to the scope of proving the existence of isometric embeddings of positively curved spheres as convex bodies in $\R^3$ (a theorem later obtained by Nirenberg \cite{Nir53} in one of his very first papers). In particular, Weyl was able to prove that $\mathfrak{R}^{+}(S^2)$ is path-connected. To the best of our knowledge, the full characterization of the homotopy type of $\mathfrak{R}^{+}(S^2)$ and $\mathfrak{R}^{+}(\mathbb{R}\mathbb{P}^2)$, namely the theorem that these spaces are actually \emph{contractible}, was first sketched by Rosenberg-Stolz in their beautiful survey \cite{RS01}.

	 If we now let $M$ be a smooth, compact manifold of dimension $n\geq 2$, with (smooth) boundary, a remarkable theorem by Gromov implies that $M$ \emph{always} supports metrics of positive scalar (in fact: sectional) curvature \cite{Gr69}. Hence, \emph{boundary conditions} must be introduced for the problem not to be trivial. This leaves room for different sorts of choices, and we will focus on some of the most natural ones, namely those defined by pointwise conditions given in terms of the mean curvature of the boundary.
	 
	 For instance, one can consider the space $\mathfrak{M}^{+}(M)$ that consists of those Riemannian metrics on $M$ that are smooth up to the boundary, and such that $(M,g)$ is a (strictly) mean-convex domain of positive scalar curvature. 
	 Once again, one wonders under what conditions on $M$ it is the case that  $\mathfrak{M}^{+}(M)\neq\emptyset$ and what is the topology of this space of metrics. Yet, some of the techniques that come into play in the study of closed manifolds do not have a straightforward extension to the case when $\partial M\neq\emptyset$, with the net result that comparatively little is known. Besides the analysis of the $n=3$ case, which is the object of the recent article \cite{CL19} by the first author and C. Li, we mention the work by Walsh \cite{Wal14} (although the focus there is on \emph{collar} boundary conditions) and Botvinnik-Kazaras \cite{BK18}. The main purpose of this note is to prove a general theorem for the (two-dimensional) closed disc $\overline{D}$, which easily provides various significant geometric applications, including a full answer to the question about the homotopy type of
	$\mathfrak{M}^{+}(M)\neq\emptyset$ in the special case above.

	 In order to state our main result, let us first introduce some notation. Given $g\in\mathfrak{R}(M)$, thus in the space of Riemannian metrics on $M$ \emph{that are smooth up to the boundary}, denote by $[g]$ the pointwise conformal class of $g$; that is, $g_1 \in [g]$ if $g_1 = e^{2u} g$ for some smooth function $u$ on $M$. Let
	 \[
	 \mathfrak{C}(M) = \{ [ g ] : g \in \mathfrak{R}(M)\},
	 \]
	 which we always assume endowed with the so-called \emph{smooth topology} inherited (as a quotient) from $\mathfrak{R}(M)$; denote by $\pi: \mathfrak{R}(M)\to \mathfrak{C}(M)$ the associated projection map.

	 \begin{thm}\label{thm:Main}
	 	Let $\mathfrak{M}^+(\overline{D})\subset \mathfrak{R}(\overline{D})$ be a non-empty space of Riemannian metrics satisfying the following two properties:
	 	\begin{enumerate}
	 		\item {for every $g\in \mathfrak{R}(\overline{D})$ the intersection $\mathfrak{M}^+(\overline{D}) \cap \pi^{-1}([g])$ is convex in the sense that if $g_i = e^{2u_i} g \in \mathfrak{M}^+(\overline{D}) \cap \pi^{-1} ([g])$, $i = 1, 2$, then
	 			\[
	 			e^{ 2 ( t_1 u_1 + t_2 u_2 )} g \in \mathfrak{M}^+ (\overline{D}) \cap \pi^{-1} [g] \ \text{for all} \ t_1, t_2\geq 0 \ \text{such that} \ t_1+t_2=1;
	 			\]
	 			}
	 		\item {$\mathfrak{M}^+(\overline{D})$ is invariant under diffeomorphisms, i.e. 
	 			\[
	 			g\in \mathfrak{M}^+(\overline{D}) \ \text{if and only if} \ \phi^{\ast}g\in\mathfrak{M}^+(\overline{D}) \ \text{for all} \ \phi\in \textup{Diff}(M).
	 			\] 
	 		}
	 		\end{enumerate}
 		Then $\mathfrak{M}^+(\overline{D})$ is contractible.
	 \end{thm}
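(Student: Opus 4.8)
The plan is to exhibit $\mathfrak{M}^+(\overline{D})$ as contractible by composing two explicit deformations: one that straightens an arbitrary element into the conformal class of the flat metric, and a second that linearly contracts inside that conformal class. Fix once and for all the standard flat disc $(\overline{D},g_0)\subset\mathbb{C}$ with its standard orientation, together with three boundary points $p_1,p_2,p_3\in\partial D$ in positive cyclic order. The only analytically serious ingredient is a \emph{normalized uniformization}: for each $g\in\mathfrak{R}(\overline{D})$ there is a \emph{unique} orientation-preserving diffeomorphism $\phi_g\colon\overline{D}\to\overline{D}$, smooth up to the boundary, with $\phi_g(p_j)=p_j$ $(j=1,2,3)$ and $\phi_g^{\ast}g=e^{2u_g}g_0$ for some $u_g\in C^\infty(\overline{D})$, and the assignment $g\mapsto(\phi_g,u_g)$ is continuous in the smooth topology. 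For existence one applies the uniformization (Riemann mapping) theorem to the complex structure $J_g$ determined by $g$ and the orientation, producing a biholomorphism $\Phi_g\colon(\overline{D},J_{\mathrm{std}})\to(\overline{D},J_g)$ — automatically orientation-preserving and, since $g$ and $\partial D$ are smooth, smooth up to $\partial D$ — with $\Phi_g^{\ast}g$ pointwise conformal to $g_0$; one then post-composes with the unique Möbius automorphism $m$ of $D$ sending $(p_1,p_2,p_3)$ to $(\Phi_g^{-1}(p_1),\Phi_g^{-1}(p_2),\Phi_g^{-1}(p_3))$ (these lie on $\partial D$ in positive cyclic order, and $\mathrm{Aut}(D)$ acts simply transitively on such triples) and sets $\phi_g=\Phi_g\circ m$. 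Uniqueness follows because two admissible choices differ by a conformal self-map of $(\overline{D},g_0)$ fixing $p_1,p_2,p_3$, hence — being a Möbius automorphism — by the identity. The continuous (indeed real-analytic) dependence of $\phi_g$ on $g$, together with smoothness up to the boundary, is the Ahlfors--Bers parametric theory for the Beltrami equation combined with elliptic boundary regularity; I expect \textbf{this} to be the main obstacle, the rest being essentially soft.

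Next, consider the conformal slice $\mathfrak{N}:=\mathfrak{M}^+(\overline{D})\cap\pi^{-1}([g_0])$. By hypothesis (2), $g\in\mathfrak{M}^+(\overline{D})$ if and only if $\phi_g^{\ast}g=e^{2u_g}g_0$ belongs to $\mathfrak{M}^+(\overline{D})$, i.e. to $\mathfrak{N}$; in particular $\mathfrak{N}\neq\emptyset$, so fix a base point $\hat g=e^{2\hat u}g_0\in\mathfrak{N}$. Writing $\mathfrak{N}=\{e^{2u}g_0:u\in A\}$, hypothesis (1) says exactly that $A\subseteq C^\infty(\overline{D})$ is convex; hence the straight-line homotopy $(u,t)\mapsto(1-t)u+t\hat u$ contracts $\mathfrak{N}$ onto $\hat g$ inside $\mathfrak{M}^+(\overline{D})$. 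Note also that for $g\in\mathfrak{N}$ uniqueness forces $\phi_g=\mathrm{id}$, so that the continuous map $U\colon\mathfrak{M}^+(\overline{D})\to\mathfrak{N}$, $U(g):=\phi_g^{\ast}g$, restricts to the identity on $\mathfrak{N}$.

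Third, we connect each $g$ to $U(g)=\phi_g^{\ast}g$ through $\mathfrak{M}^+(\overline{D})$, continuously in $g$. Let $\mathcal{G}:=\mathrm{Diff}^+(\overline{D};p_1,p_2,p_3)$. Restriction to $\partial D$ presents $\mathcal{G}$ as the total space of a fibration with base $\mathrm{Diff}^+(S^1;p_1,p_2,p_3)$ and fibre $\mathrm{Diff}(D^2,\partial D^2)$; the base is a product of three copies of the (convex, hence contractible) group of diffeomorphisms of $[0,1]$ fixing its endpoints, and the fibre is contractible by Smale's theorem, so $\mathcal{G}$ is contractible. Since $g\mapsto\phi_g$ is a continuous map of $\mathfrak{M}^+(\overline{D})$ into the contractible space $\mathcal{G}$, it is null-homotopic: post-composing with a contraction of $\mathcal{G}$ to $\mathrm{id}$ yields a continuous family of paths $(g,s)\mapsto\psi_{g,s}\in\mathcal{G}$ with $\psi_{g,0}=\mathrm{id}$ and $\psi_{g,1}=\phi_g$.

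Finally, assemble the contraction $H\colon\mathfrak{M}^+(\overline{D})\times[0,1]\to\mathfrak{M}^+(\overline{D})$ by
\[
H(g,t)=\begin{cases}(\psi_{g,2t})^{\ast}g, & 0\le t\le\tfrac12,\\[1mm] e^{2\left((2-2t)u_g+(2t-1)\hat u\right)}g_0, & \tfrac12\le t\le 1.\end{cases}
\]
The two branches agree at $t=\tfrac12$ (both equal $\phi_g^{\ast}g=e^{2u_g}g_0$), so $H$ is continuous; $H(\cdot,0)=\mathrm{id}$ and $H(\cdot,1)\equiv\hat g$; and $H$ takes values in $\mathfrak{M}^+(\overline{D})$ — on $[0,\tfrac12]$ because it is a diffeomorphism pullback of the element $g\in\mathfrak{M}^+(\overline{D})$, by hypothesis (2), and on $[\tfrac12,1]$ because it is a convex combination, in the sense of hypothesis (1), of the two elements $e^{2u_g}g_0=\phi_g^{\ast}g$ and $\hat g$ of $\mathfrak{N}$. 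Thus $H$ contracts $\mathfrak{M}^+(\overline{D})$ to the point $\hat g$, proving the theorem.
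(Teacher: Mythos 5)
Your argument is correct, but it reaches the conclusion by a route that differs from the paper's in two substantive ways. First, where you prove contractibility of $\mathcal{G}=\Diff^+_{\bullet}(\overline{D})$ by fibering over $\Diff^+(S^1;p_1,p_2,p_3)$ with fibre $\Diff(D^2,\partial D^2)$ and invoking Smale's theorem, the paper deliberately avoids Smale: it shows (Proposition~\ref{pro:homeo}) that the normalized uniformization map $\Phi:\mathfrak{C}(\overline{D})\to\Diff^+_{\bullet}(\overline{D})$ is a \emph{homeomorphism}, and proves $\mathfrak{C}(\overline{D})$ contractible directly by identifying it with the convex set $C^{\infty}(\overline{D},D)$ of Beltrami coefficients (Proposition~\ref{pro:ContractibleD}); contractibility of $\Diff^+_{\bullet}(\overline{D})$ then comes for free, which is exactly the sense in which the authors call their disc argument more elementary than the $S^2$ case (Remark~\ref{rem:Contract}(b)). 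Second, your homotopy first transports every metric into the single fibre $\pi^{-1}([g_0])$ via the paths $\psi_{g,s}$ and then contracts that one convex fibre, so you only use hypothesis (1) for one conformal class; the paper instead retracts each fibre $\pi^{-1}([g])$ onto the section $\sigma([g])=\Phi([g])^*g_D$ (using convexity of \emph{every} fibre) and then contracts the base $\mathfrak{C}(\overline{D})$ — though the paper's closing remark notes that star-shapedness of a single fibre suffices, so the two economies are comparable. Both proofs stand or fall on the same hard ingredient, which you correctly isolate: the Ahlfors--Bers parametric solution of the Beltrami equation with smooth dependence up to the boundary (the paper cites \cite{AB60} and \cite{ES70} for precisely this). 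Two small imprecisions in your write-up, neither fatal: $\Diff^+(S^1;p_1,p_2,p_3)$ is not literally a product of three copies of $\Diff^+([0,1],\{0,1\})$ (derivatives must match at the marked points), but it is still convex after lifting to the universal cover, so the base is contractible as claimed; and the passage from weak contractibility of the total space of your fibration to genuine contractibility should be justified by noting that these diffeomorphism groups have CW homotopy type.
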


	The two assumptions above are often easy to verify: the second one is always satisfied when $\mathfrak{M}^+(\overline{D})$ is given by \emph{curvature} conditions (which are our main concern), while the first one holds (for instance) for linear inequalities (possibly equalities) involving the Gauss curvature of the disc in question, and the geodesic curvature of its boundary. 
	Indeed, we have the well-known equations
	\[
	e^{2u} K_{e^{2u} g} = K_g - \Delta_g u, \ \ \ \ e^{u}k_{e^{2u} g}=k_g+\nu(u)
	\]
	where $\nu$ is the outward-pointing unit normal along the boundary. 
	Thus, as a special case, we can derive what follows:
	
	\begin{cor}\label{cor:R+D}
	Let $\mathfrak{M}^+(\overline{D})$ denote one of the following spaces of Riemannian metrics:
	\begin{itemize}
		\item{$I:= \left\{g\in \mathfrak{R}(\overline{D}) \ : \ K_g\geq 0\right\}, \ II:= \left\{g\in \mathfrak{R}(\overline{D}) \ : \ K_g>0\right\}, \ III:= \left\{g\in \mathfrak{R}(\overline{D}) \ : \ K_g= 0\right\}$;}
	    	\item{$I_{\partial}:= \left\{g\in \mathfrak{R}(\overline{D}) \ : \ k_g\geq 0\right\}, \ II_{\partial}:= \left\{g\in \mathfrak{R}(\overline{D}) \ : \ k_g> 0\right\}, \ III_{\partial}:= \left\{g\in \mathfrak{R}(\overline{D}) \ : \ k_g=0\right\}$.}
	\end{itemize}
 Then $\mathfrak{M}^+(\overline{D})$ is contractible; furthermore any space $\mathfrak{M}^+(\overline{D})$ obtained by intersecting one among $I, II, III$ with one among $I_{\partial}, II_{\partial}, III_{\partial}$ is either empty or contractible.
	\end{cor}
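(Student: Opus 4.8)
The plan is to obtain Corollary \ref{cor:R+D} as a direct consequence of Theorem \ref{thm:Main}: for each of the six listed spaces, and for each of the nine pairwise intersections, I would check the two structural hypotheses of that theorem, and then separately record non-emptiness. Condition (2), diffeomorphism invariance, is immediate: the Gauss curvature and the boundary geodesic curvature are pointwise isometric invariants, so $K_{\phi^\ast g}=K_g\circ\phi$ on $\overline{D}$ and $k_{\phi^\ast g}=k_g\circ\phi$ on $\partial\overline{D}$ for every $\phi\in\Diff(\overline{D})$; hence each set defined by a pointwise sign or vanishing condition on $K_g$ or $k_g$ is $\Diff(\overline{D})$-invariant, and this is inherited by intersections.

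For condition (1), conformal convexity, I would use the two transformation laws recalled just before the corollary. If $g_i=e^{2u_i}g$ lies in the conformal class $\pi^{-1}([g])$, then $e^{2u_i}K_{g_i}=K_g-\Delta_g u_i$ and $e^{u_i}k_{g_i}=k_g+\nu(u_i)$; since the conformal factors $e^{2u_i},e^{u_i}$ are positive, the sign (respectively the vanishing) of $K_{g_i}$ coincides with that of $K_g-\Delta_g u_i$, and similarly for $k_{g_i}$ and $k_g+\nu(u_i)$. Now for $t_1,t_2\geq0$ with $t_1+t_2=1$ and $u=t_1u_1+t_2u_2$, linearity of $\Delta_g$ and of $\nu$ gives
\[
K_g-\Delta_g u=t_1\bigl(K_g-\Delta_g u_1\bigr)+t_2\bigl(K_g-\Delta_g u_2\bigr),
\]
\[
k_g+\nu(u)=t_1\bigl(k_g+\nu(u_1)\bigr)+t_2\bigl(k_g+\nu(u_2)\bigr),
\]
the normalization $t_1+t_2=1$ being precisely what allows the $K_g$ and $k_g$ terms to recombine. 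It follows that each of $I,II,III,I_\partial,II_\partial,III_\partial$ meets every conformal class in a convex set (in terms of the conformal factor), and since a finite intersection of convex sets is convex, condition (1) of Theorem \ref{thm:Main} holds for all six sets and for all nine pairwise intersections.

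It remains to handle non-emptiness. The flat disc $(\overline{D},g_{\mathrm{eucl}})$ has $K\equiv0$ and $k\equiv1$, hence lies in $I$, $III$, $I_\partial$ and $II_\partial$; a round spherical cap of angular radius $<\pi/2$ has $K>0$ and $k>0$, hence lies in $II$ and $II_\partial$; and the closed round hemisphere has $K\equiv1>0$ and geodesic boundary $k\equiv0$, hence lies in $I$, $II$, $I_\partial$ and $III_\partial$ (all three metrics being smooth up to the boundary). Checking the nine intersections, each one contains one of these three models except $III\cap III_\partial$; and that last one is empty, for by the Gauss--Bonnet theorem $\int_{\overline{D}}K\,dA+\int_{\partial\overline{D}}k\,ds=2\pi$, which rules out a metric with $K\equiv0$ and $k\equiv0$. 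Therefore each of the six listed spaces, together with each non-empty pairwise intersection, fulfils the hypotheses of Theorem \ref{thm:Main} and is contractible, while $III\cap III_\partial=\emptyset$ --- which is exactly what Corollary \ref{cor:R+D} asserts.

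I do not expect any real obstacle here: the substance is entirely in Theorem \ref{thm:Main}, and the only points calling for a little care are keeping track of the positive conformal factors so that the sign and vanishing conditions pass correctly between $g$ and $g_i$, the bookkeeping role of the normalization $t_1+t_2=1$ in the convexity computation, and the one-line Gauss--Bonnet argument that singles out $III\cap III_\partial$ as the unique empty case.
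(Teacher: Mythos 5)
Your proof is correct and follows exactly the route the paper intends: the corollary is derived from Theorem \ref{thm:Main} by checking diffeomorphism invariance and conformal convexity via the transformation laws $e^{2u}K_{e^{2u}g}=K_g-\Delta_g u$ and $e^{u}k_{e^{2u}g}=k_g+\nu(u)$, which is precisely the (sketched) argument given in the paper just before the statement. Your additional analysis of which intersections are non-empty, including the Gauss--Bonnet argument showing $III\cap III_{\partial}=\emptyset$, is accurate and goes slightly beyond what the corollary's ``either empty or contractible'' phrasing strictly requires.
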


The arguments we present turn out to be rather direct, and conceptually transparent. In addition, as we will see, they are somewhat more elementary than in the case of $S^2$ (cf. Remark \ref{rem:Contract}). Note that when $n\geq 4$ the analogy with the closed case (as briefly summarized above) suggests that even $\pi_0 (\mathfrak{M}^{+}(M))$ should be non-trivial in many cases of interest. It is reasonable to expect some sort of progress on these matters in the years to come.

		 	\
		 	
		\noindent \textit{Acknowledgements:} The first author would like to thank Alexander Kupers for various clarifications; the second author would like to thank Simon Donaldson for bringing Ahlfors' work on the Beltrami equation to his attention. The authors would also like to thank the anonymous referee for providing very helpful remarks that improved the presentation of this paper.

		A. C. is partly supported by the National Science Foundation (through grant DMS 1638352) and by the Giorgio and Elena Petronio Fellowship Fund. D. W. is partly supported by the National Science Foundation (through grant DMS 1611745) and the Simonyi Endowment of the Institute for Advanced Study.
		
		This project was developed at the Institute for Advanced Study during the special year \emph{Variational Methods in Geometry}: both authors would like to acknowledge the support of the IAS and the excellent working conditions.

		\section{Proof of Theorem \ref{thm:Main}}\label{sec:proofs}

		Let $\textup{Diff}(M)$ be the group of diffeomorphisms of $M$; when $M$ is a manifold with boundary we agree that diffeomorphisms are proper in the sense that they map $\p M$ onto $\p M$. When $M$ is oriented (as we assume), we denote by $\Diff^+(M)$ the orientation-preserving subgroup of $\Diff(M)$. In either case, we also tacitly employ the (strong) smooth topology on these spaces of maps. Notice that in case $\partial M\neq\emptyset$ we do \emph{not} require diffeomorphisms to restrict to the identity map along the boundary. 
			If $M$ is a compact surface and we fix three distinct points $x_1, x_2, x_3$ on $M$, then we let
		\[
		\Diff_{\bullet}^+(M) = \{ \phi \in \Diff^+(M): \phi (x_i) = x_i, \ i = 1, 2, 3\}.
		\]
		Specifically, for $M=\overline{D}$ we agree to choose
		\[
		x_1 = 1, \ x_2 = i, \ x_3 = -1.
		\]
			As usual, we denote by $C^{\infty}(M)$ be the set of (real-valued) smooth functions on $M$. The notation $\mathds{1}_X$ stands for the identity map on a set $X$.

		Let $z = x + i y$ be the standard complex coordinate on $\mathbb{C}$, and let
		\[	g_0= |dz|^2 := (d z \otimes d\bar{z} + d\bar{z} \otimes dz)/2=  dx \otimes dx + dy \otimes dy, 
		\] the standard Euclidean metric on the plane.
		More generally, we adopt the convention that $|\vartheta|^2:= \text{Re}(\vartheta\otimes\overline{\vartheta})=(\vartheta\otimes\overline{\vartheta}+\overline{\vartheta}\otimes\vartheta)/2$ for any complex valued 1-form $\vartheta$.
		We will identify $D$ with
		the unit disc $\mathbb{D}\subset\mathbb{C}$ (as smooth manifolds) and, correspondingly, we will write $C^{\infty}(\overline{D},D)\subset C^{\infty}(\overline{D},\mathbb{C})$ to denote the set of complex-valued maps defined on $\overline{D}$ whose image lies in $\left\{z\in\mathbb{C} \ : \ |z|<1 \right\}$ and that are smooth up to the boundary. 
		
		\

		We shall employ the uniformization theorem in the following version:
		\begin{thm} \label{th:Ufz-S2}
			For any $g \in \mathfrak{R}(\overline{D})$, there exists a map $\phi \in \Diff^+(\overline{D})$ and a function $u \in C^{\infty}(\overline{D})$ such that
			\[
			g = \phi^* (e^{2u} g_0).
			\] 
		\end{thm}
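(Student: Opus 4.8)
The plan is to reduce the statement for the disc to the classical uniformization of the sphere by a doubling argument, and then to carefully extract the boundary-diffeomorphism factor. Given $g\in\mathfrak{R}(\overline{D})$, I would first note that $g$ determines a conformal structure on $\overline{D}$, i.e. an almost complex structure $J$ on the interior which is smooth up to the boundary; in real dimension two this $J$ is automatically integrable. The subtlety is that $\overline{D}$ has boundary, so I cannot invoke the uniformization theorem for closed surfaces directly. Instead I would pass to the \emph{Schottky double} $\widehat{D}$ of $(\overline{D}, g)$: glue two copies of $\overline{D}$ along the boundary circle and equip the result with the conformal structure obtained by reflecting $J$ across $\partial D$. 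Since $\overline{D}$ is a disc, $\widehat{D}$ is diffeomorphic to $S^2$, and it carries an anti-holomorphic involution $\rho$ whose fixed-point set is exactly the image of $\partial D$. Here one must check that the doubled conformal structure is smooth across the seam — this is the standard Schwarz-reflection computation for conformal (isothermal) coordinates near a boundary arc, and it is where the ``smooth up to the boundary'' hypothesis is essential.

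Next I would apply the uniformization theorem for the sphere: there is a diffeomorphism $\Phi:\widehat{D}\to S^2=\mathbb{CP}^1$ that is holomorphic for the doubled conformal structure, i.e. $\Phi^*(g_{\mathrm{round}})=e^{2v}g_{\widehat{D}}$ for some smooth $v$. One can arrange, by post-composing with a Möbius transformation, that $\Phi$ intertwines the involution $\rho$ with the standard complex conjugation $z\mapsto\bar z$ on $\mathbb{CP}^1$; then $\Phi$ maps the two halves of $\widehat{D}$ to the two hemispheres and $\partial D$ to the equator, i.e. to the unit circle. Restricting $\Phi$ to one copy of $\overline{D}$ inside $\widehat{D}$ gives a diffeomorphism of $\overline{D}$ onto the closed upper hemisphere, which is itself conformally (indeed holomorphically, via a Möbius map fixing the real axis and sending the upper half-plane to the unit disc, composed with a suitable chart) identified with $(\overline{\mathbb{D}},g_0)$. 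Chasing this composition through, and using the orientation conventions to pick the orientation-preserving branch, I obtain $\phi\in\Diff^+(\overline{D})$ and $u\in C^\infty(\overline{D})$ with $g=\phi^*(e^{2u}g_0)$, after renaming: concretely $\phi$ is (a chart representative of) $\Phi^{-1}$ and $e^{2u}$ absorbs both the factor $e^{2v}$ and the conformal factor of the hemisphere-to-disc identification.

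The main obstacle, and the step I would spend the most care on, is the \emph{regularity at the boundary}: one must verify that the isothermal coordinates for $g$ extend smoothly up to $\partial D$, equivalently that the reflected structure on $\widehat{D}$ is genuinely $C^\infty$ across the seam, so that the uniformizing map $\Phi$ — a priori only smooth on an abstract smooth surface — produces an honest element of $\Diff^+(\overline{D})$ with $u$ smooth \emph{including the boundary}. This is precisely the point where, as the acknowledgements hint, one appeals to Ahlfors' analysis of the Beltrami equation: solving $\partial_{\bar z}w=\mu\,\partial_z w$ with a Beltrami coefficient $\mu$ that is smooth up to the boundary yields a solution smooth up to the boundary, and conjugating the doubled (reflection-symmetric) Beltrami coefficient by conjugation shows the solution respects the symmetry, giving the smooth global chart on $\widehat{D}$. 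Once this regularity is in hand, the rest of the argument is the essentially formal bookkeeping of Möbius normalizations and orientations sketched above.
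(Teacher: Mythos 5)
Your argument is correct in outline, but it is worth noting that it is a repackaging of the same analytic engine the paper uses rather than a genuinely independent route. The paper works directly on the half-plane: it converts $[g]$ into a Beltrami coefficient $\mu$ on $\overline{\mathbb{H}}$ via the Cayley transform, invokes Ahlfors--Bers for the normalized diffeomorphism $w$ of $\overline{\mathbb{H}}$ solving $w_{\bar z}=\mu w_z$ (with boundary regularity and continuous dependence supplied by Earle--Schatz), and sets $\phi=h\circ w\circ h^{-1}$. Your Schottky double of $\overline{D}$, uniformized equivariantly with respect to the antiholomorphic involution and then restricted to one half, is the classical geometric picture of exactly this reflection trick; what your packaging buys is that the uniformization of $S^2$ can be quoted as a black box and the three-point normalization (which the paper needs later, for Proposition 2.4) is replaced by a M\"obius normalization of the involution. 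The one point where your write-up is imprecise is the assertion that the doubled conformal structure is ``genuinely $C^\infty$ across the seam'': in the naive atlas (two copies of $\overline{D}$ glued by the identity), the reflected Beltrami coefficient $z\mapsto\overline{\mu(\bar z)}$ need not even be continuous across the seam unless $\mu$ is real there, so the double is \emph{a priori} only a topological sphere with an $L^\infty$ conformal structure. This does not break the argument --- Ahlfors--Bers requires only $\|\mu\|_\infty<1$, and the one-sided smoothness of the solution up to the boundary is precisely the Earle--Schatz refinement you gesture at --- but the cleaner fix is either to build the double using local boundary isothermal charts (obtained, e.g., by extending $g$ smoothly past $\partial D$ and using interior isothermal coordinates on the larger disc) or to drop the double altogether and argue on $\overline{\mathbb{H}}$ as the paper does. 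With that point made precise, your proof is complete and equivalent to the paper's.
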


		  To properly interpret the result above, and for later scopes, it is actually convenient to briefly recall the proof, recast in terms of Beltrami's equation.
		Following e.g. \cite{EE69, ES70} it is easy to construct a bijection from $\mathfrak{C}(\overline{D})$ to the space $C^{\infty}(\overline{D},D)$. 
			To define it, given $[g] \in \mathfrak{C}(\overline{D})$ with
			\[
			g = g_{11} dx\otimes dx + g_{12} (dx \otimes dy + dy \otimes dx) + g_{22} dy \otimes dy, 
			\]
			observe that
			\begin{align*}
			\ \ \ 4 g  &= (g_{11} - g_{22} - 2 i g_{12} ) dz \otimes dz + (g_{11} - g_{22} + 2 i g_{12}) d \bar{z} \otimes d \bar{z} 
			+ (g_{11} + g_{22}) (dz \otimes d\bar{z} + d \bar{z} \otimes dz) \\
		\ \ \	&= \rho | dz + \mu d \bar{z}|^2, 
			\end{align*}
			provided we set
			\[
			\rho = (g_{11}+g_{22}) + 2\sqrt{g_{11} g_{22} - g_{12}^2}, \ \
			\mu = \frac{(g_{11} - g_{22}) + 2 i g_{12}}{(g_{11} + g_{22}) + 2 \sqrt{g_{11} g_{22} - g_{12}^2}}.
			\]
			Note that
			\[
			|\mu|^2 = \frac{(g_{11} + g_{22}) - 2 \sqrt{g_{11} g_{22} - g_{12}^2}}{(g_{11} + g_{22}) + 2 \sqrt{g_{11} g_{22} - g_{12}^2}} < 1,
			\]
			hence the map $[g] \mapsto \mu$ is well-defined. 
			Conversely, given $\mu \in C^{\infty}(\overline{D}, D)$, let
			$
			g = |dz + \mu d\bar{z}|^2.
			$
			Then, the formula
			\begin{align*}
			g & = |1 + \mu|^2 dx \otimes dx + |1 - \mu|^2 d y \otimes dy +  i (\bar{\mu} - \mu) (dx \otimes dy + dy \otimes dx) 
			\end{align*}
			defines a Riemannian metric on $\overline{D}$, since $|1 \pm \mu|^2 > 0$ and
			\[
			|1 + \mu|^2 |1 - \mu|^2 - ( i (\bar{\mu}-\mu))^2 =  (1 - |\mu|^2)^2 > 0. 
			\]
			It is readily checked that the map $\mu \mapsto [g]$ is the inverse of the previous one. Furthermore, both maps in question are continuous  with respect to the smooth topologies, so that one actually obtains a homeomorphism $\mathfrak{C}(\overline{D})$ to the space $C^{\infty}(\overline{D},D)$. As a result, we can draw the following important conclusion.
			
			\begin{prop}\label{pro:ContractibleD}
				The space $\mathfrak{C}(\overline{D})$ is contractible.
			\end{prop}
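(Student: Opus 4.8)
The plan is to leverage the homeomorphism $\mathfrak{C}(\overline{D}) \cong C^{\infty}(\overline{D},D)$ that was just constructed via Beltrami coefficients, and then to show that the target space is contractible because it is convex. The key point is that contractibility is a topological property, so it suffices to prove it for $C^{\infty}(\overline{D},D)$.

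First I would recall that $C^{\infty}(\overline{D},\mathbb{C})$, endowed with the smooth topology (induced by the family of $C^k$-norms of a map together with all of its partial derivatives on the compact set $\overline{D}$), is a locally convex topological vector space: both vector-space operations are continuous with respect to these seminorms. Next, I would check that $C^{\infty}(\overline{D},D)$ is a convex subset: given $\mu_0, \mu_1 \in C^{\infty}(\overline{D},D)$ and $t \in [0,1]$, the map $\mu_t := (1-t)\mu_0 + t\mu_1$ is again smooth up to the boundary, and at every point of $\overline{D}$ one has $|\mu_t| \leq (1-t)|\mu_0| + t|\mu_1| < 1$, so $\mu_t \in C^{\infty}(\overline{D},D)$.

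Finally, I would invoke the elementary fact that every convex subset $C$ of a topological vector space is contractible: fixing any basepoint $\mu_\ast \in C$ (for instance $\mu_\ast \equiv 0$, which corresponds under our identification to the conformal class $[g_0]$ of the Euclidean metric), the straight-line homotopy $H(\mu,t) = (1-t)\mu + t\mu_\ast$ is jointly continuous, has $H(\cdot,0) = \mathds{1}_C$ and $H(\cdot,1) \equiv \mu_\ast$, and takes values in $C$ by convexity. Transporting this homotopy back through the homeomorphism of the preceding discussion yields a contraction of $\mathfrak{C}(\overline{D})$.

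I do not expect any substantive obstacle here. The one point that warrants a line of justification is the joint continuity of $H$ as a map $C^{\infty}(\overline{D},D) \times [0,1] \to C^{\infty}(\overline{D},D)$ in the smooth topology; this is routine, since for each fixed $t$ the operation is affine with constant coefficients, and the dependence on $t$ (and on derivatives of $\mu$) is polynomial, hence continuous for the $C^k$-seminorms simultaneously for all $k$.
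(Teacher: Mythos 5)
Your argument is correct and is exactly the paper's proof: the paper likewise deduces contractibility of $\mathfrak{C}(\overline{D})$ from the homeomorphism with $C^{\infty}(\overline{D},D)$ together with the convexity of that space. You have merely spelled out the straight-line homotopy and its continuity, which the paper leaves implicit.
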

		
		\begin{proof}
			The assertion follows at once, by virtue of the homeomorphism above, from the (strict) convexity of $C^{\infty}(\overline{D}, D)$.
		\end{proof}

\begin{rmk}\label{rem:Contract}
	\emph{(a)} Proposition~\ref{pro:ContractibleD} might actually be regarded as a special case of a more general fact, as we discuss in Appendix \ref{app:ConfClass} below for the sake of completeness (cf. \cite[p. 213]{Don11} and, for compact oriented manifolds, see Theorem 2.2 in \cite{FT84} applied for $s=\infty$ based on Remark 2.5 therein).\newline 
\emph{(b)} In proving the contractibility of $\mathfrak{C}(S^2)$, which is crucial for the contractibility of $\mathfrak{R}^{+}(S^2)$, Rosenberg-Stolz invoke the celebrated theorem by Smale \cite{Sma59} on the space of diffeomorphisms of the sphere.\newline
\emph{(c)} Let us explicitly remark that the sole contractibility of $\mathfrak{C}(M)$ does not suffice, in general, for the conclusion of Theorem \ref{thm:Main} to hold, as it is shown for instance by the case of $\mathfrak{R}^+(S^8)$, because of \cite{Hit74}.
\end{rmk}

		 Let $\mathbb{H} = \{z \in \mathbb{C}: \textup{Im} (z) > 0\}$ and $h: \mathbb{H} \to \mathbb{D}$ be the Cayley transform $h(z) = -(z-i) / (z+i)$. Then $h$ is biholomorphic from $\mathbb{H}$ to $\mathbb{D}$, extends smoothly to a map $\overline{\mathbb{H}}:=\left\{\text{Im}(z)\geq 0\right\}\cup\left\{\infty \right\}\to \overline{\mathbb{D}}$ and defines (by restriction) a smooth map from the boundary $\p \mathbb{H}$ onto the unit circle $\p \mathbb{D}$. Therefore, given $g\in\mathfrak{R}(\overline{D})$ the conformal class $[g]$ uniquely determines a smooth function $\mu$ on $\overline{\mathbb{H}}$ with $\sup |\mu| \le c < 1$.
		
		\
		At that stage, it follows from Ahlfors-Bers~\cite{AB60} that there is a unique map $w$ which is a diffeomorphism from $\overline{\mathbb{H}}$ onto itself, leaves $0$, $1$, $\infty$ fixed, and satisfies the  Beltrami equation $w_{\bar{z}} = \mu w_z$. This implies that $w \circ h^{-1}$ is the isothermal coordinate map for the metric $g$; hence, the map $\phi = h \circ w \circ h^{-1}$ is a biholomorphic map from $(D, g)$ onto $(D,g_0)$. Theorem \ref{th:Ufz-S2} for the closed disc follows at once. So, for $[g]\in\mathfrak{C}(\overline{D})$ we shall set $\Phi([g])=\phi$; 
		note that $\Phi([g])\in \Diff_{\bullet}^+(\overline{D})$.

\begin{prop}\label{pro:homeo}
The map
\[
\Phi: \mathfrak{C}(\overline{D})\to \Diff^+_{\bullet}(\overline{D})
\] is a homeomorphism.
\end{prop}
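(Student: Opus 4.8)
The plan is to exhibit an explicit two-sided inverse of $\Phi$ and then verify continuity of $\Phi$ and of its inverse separately. For the inverse, I would set $\Psi(\phi) = [\phi^{*}g_0] \in \mathfrak{C}(\overline{D})$ for $\phi \in \Diff^{+}_{\bullet}(\overline{D})$ and argue that $\Psi = \Phi^{-1}$. That $\Psi \circ \Phi = \mathds{1}$ is immediate from the construction of $\Phi$: for $[g] \in \mathfrak{C}(\overline{D})$ the diffeomorphism $\phi = \Phi([g]) = h \circ w \circ h^{-1}$ is conformal from $(D,g)$ onto $(D,g_0)$, so $\phi^{*}g_0 \in [g]$. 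For $\Phi \circ \Psi = \mathds{1}$, given $\phi \in \Diff^{+}_{\bullet}(\overline{D})$ I would pass to $\psi := h^{-1} \circ \phi \circ h$, a diffeomorphism of $\overline{\mathbb{H}}$ onto itself which fixes $0, 1, \infty$ (because $h(0)=1$, $h(1)=i$, $h(\infty)=-1$ and $\phi(x_i)=x_i$); since $\phi$, hence $\psi$, is orientation-preserving, $\mu := \psi_{\bar z}/\psi_{z}$ is a well-defined smooth function with $\sup_{\overline{\mathbb{H}}}|\mu| < 1$ by compactness of $\overline{\mathbb{H}}$. A direct computation gives $\psi^{*}g_0 = |\psi_{z}|^{2}\,|dz + \mu\,d\bar z|^{2}$, and since $h$ is conformal the class $[\phi^{*}g_0]$ pulls back under $h$ to $[\psi^{*}g_0]$; hence $\mu$ is precisely the Beltrami coefficient associated with $[\phi^{*}g_0]$ in the identification $\mathfrak{C}(\overline{D}) \cong C^{\infty}(\overline{D},D)$ recalled above. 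As $\psi$ solves $w_{\bar z} = \mu w_{z}$ and is normalized, the uniqueness clause of Ahlfors--Bers forces the normalized solution attached to $[\phi^{*}g_0]$ to be $\psi$, so $\Phi(\Psi(\phi)) = h \circ \psi \circ h^{-1} = \phi$.

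Continuity of $\Phi^{-1} = \Psi$ is soft: the map $\phi \mapsto \phi^{*}g_0$ is continuous from $\Diff^{+}(\overline{D})$ to $\mathfrak{R}(\overline{D})$ in the respective smooth topologies (it is the pullback of the fixed tensor $g_0$), and post-composing with $\pi : \mathfrak{R}(\overline{D}) \to \mathfrak{C}(\overline{D})$, continuous by definition of the quotient topology, gives continuity of $\Psi$.

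Continuity of $\Phi$ is the heart of the matter. Via the homeomorphism $\mathfrak{C}(\overline{D}) \cong C^{\infty}(\overline{D},D)$ and the conformal change of variables $h$, the map $\Phi$ becomes $\mu \mapsto h \circ w^{\mu} \circ h^{-1}$, with $w^{\mu}$ the unique $0,1,\infty$-normalized solution on $\overline{\mathbb{H}}$ of $w_{\bar z} = \mu w_{z}$; one therefore has to show that the solution operator $\mu \mapsto w^{\mu}$ is continuous in the smooth topology up to the boundary. I would deduce this from the quantitative dependence statements of Ahlfors--Bers \cite{AB60} (continuity, indeed real-analyticity, of $\mu \mapsto w^{\mu}$ in Hölder and Sobolev scales, uniformly over a ball $\{\mu : \sup|\mu| \le c\}$), upgraded to the $C^{\infty}$ topology by bootstrapping interior and boundary Schauder estimates for the elliptic Beltrami equation, the neighbourhood of $\infty$ being handled in the standard chart — or, more cleanly, by carrying out the whole discussion on $\overline{\mathbb{D}}$ from the outset, so that no point at infinity ever occurs. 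I expect this final point — promoting the classical finite-regularity parameter-dependence of the Beltrami solution to the smooth Fr\'echet topology while keeping the normalization under control — to be the only genuinely technical step, the remaining assertions being essentially formal.
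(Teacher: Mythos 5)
Your proposal is correct and its overall architecture coincides with the paper's: the inverse is the same map $\Psi(\phi)=[\phi^{*}g_0]$, continuity of $\Psi$ is the same soft composition of pullback with the quotient projection, and the continuity of $\Phi$ is reduced in both cases to the smooth dependence of the normalized Beltrami solution $w^{\mu}$ on $\mu$. The one genuinely different step is your verification of $\Phi\circ\Psi=\mathds{1}$: you compute the Beltrami coefficient $\mu=\psi_{\bar z}/\psi_z$ of $\psi=h^{-1}\circ\phi\circ h$ directly, observe that it is the coefficient attached to $[\phi^{*}g_0]$, and invoke the \emph{uniqueness} clause of Ahlfors--Bers for the normalized solution. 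The paper instead proves the sharper statement that any two normalized conformal diffeomorphisms representing the same class coincide: it shows by an explicit computation of $\eta^{*}g_0$ that an orientation-preserving diffeomorphism with $\eta^{*}g_0=e^{2w}g_0$ satisfies $\partial\eta/\partial\bar z=0$, hence lies in $\Aut(\overline{D})$, and then uses that a M\"obius automorphism of the disc fixing three boundary points is the identity. Your route is slightly more economical because it recycles the uniqueness already built into the definition of $\Phi$; the paper's route isolates a self-contained rigidity fact (conformal self-maps of $(D,g_0)$ are holomorphic) that does not lean on the Ahlfors--Bers machinery. Finally, on the only technical point you flag --- promoting finite-regularity dependence of $\mu\mapsto w^{\mu}$ to continuity in the smooth Fr\'echet topology up to the boundary --- be aware that this is precisely what is established in Earle--Schatz \cite{ES70}, Section 2B, which is what the paper cites; your proposed Schauder bootstrap is a reasonable sketch of how such a statement is obtained, but as written it is an outline rather than a proof, and you could simply quote that reference instead.
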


\begin{proof}

The continuity of the map $\Phi$ follows from the continuity of the three maps
\[
\begin{tikzcd}
\mathfrak{C}(\overline{D}) \arrow{r} & C^{\infty}(\overline{D}, \mathbb{C}) \arrow{r} & \textup{Diff}^+_{\bullet} (\overline{\mathbb{H}})  \arrow{r} & \textup{Diff}^+_{\bullet} (\overline{D}) \\
\textup{[}g\textup{]} \arrow[mapsto]{r} & \mu \arrow[mapsto]{r} & w \arrow[mapsto]{r} & \phi:=h \circ w \circ h^{-1}
\end{tikzcd}
\]

the first and third claim being obvious, the second being proven in \cite[2B]{ES70}. 

We then assert that the map in question is bijective, and that its inverse is the (patently continuous) map
$\Psi: \Diff^+_{\bullet}(\overline{D}) \to \mathfrak{C}(\overline{D})$ defined by $\phi \mapsto [\phi^*g_0]$. The fact that $\Psi\circ \Phi=\mathds{1}_{\mathfrak{C}(\overline{D})}$ descends from the biholomorphicity of $\phi=\Phi([g])$ as a map from $(D,g)$ to $(D,g_0)$. Instead, the fact that \[
\Phi\circ \Psi=\mathds{1}_{\Diff^+_{\bullet}(\overline{D})}
\] follows from observing that if $\tilde{\phi}^*(g_0)$ is (pointwise) conformal to $\phi^*(g_0)$, in the sense that $\tilde{\phi}^*(g_0) = e^{2u} \phi^*(g_0)$ for some $u \in C^{\infty}(\overline{D})$, then $\tilde{\phi} = \phi$. To see this, it is sufficient to show 
\begin{equation} \label{eq:tphi}
\tilde{\phi} \circ \phi^{-1} \in \textup{Aut}(\overline{D}),
\end{equation}
since $\tilde{\phi} \circ \phi^{-1}$ fixes three points (see for example \cite{Lan99}). Here $\textup{Aut}(\overline{D})$ denotes the set of biholomorphic automorphisms of the unit disc in $\mathbb{C}$.

In order to verify \eqref{eq:tphi}, let $\eta \in \Diff^+(\overline{D})$ satisfy
$
\eta^* g_0 = e^{2w} g_0$ for some $w \in C^{\infty}(\overline{D})$: we claim that any such map $\eta$ is holomorphic with respect to the coordinate $z$ on $\mathbb{C}$, i.e.,
\begin{equation} \label{eq:dphi=0}
\frac{\p \eta}{\p \bar{z}} = 0.
\end{equation}
(It then follows from the holomorphic version of the inverse function theorem (cf. e.g. Theorem 9.6 in \cite{Gri89}) that $\eta^{-1}$ is also holomorphic, hence $\eta \in \Aut(\overline{D})$).
To check that \eqref{eq:dphi=0} holds, first note that 
\[
\eta^* g_0 
=  \frac{\p \eta}{\p z} \frac{\p \overline{\eta}}{\p z} dz \otimes dz +  \frac{\p \eta}{\p \bar{z}} \frac{\p \overline{\eta}}{\p \bar{z}} d\bar{z} \otimes d\bar{z} 
+ \Big(\Big| \frac{\p \eta}{\p \bar{z}} \Big|^2 + \Big|\frac{\p \eta}{\p z} \Big|^2\Big) |dz|^2,
\]
so that, by our assumption, we have
\[
\frac{\p \eta}{\p z} \frac{\p \overline{\eta}}{\p z} = 0, \quad \frac{\p \eta}{\p \bar{z}} \frac{\p \overline{\eta}}{\p \bar{z}} = 0.
\]
On the other hand, since $\eta$ preserves the orientation,
\begin{align*}
0 < \textup{Jacobian} (\eta) = \frac{\p (\eta, \overline{\eta}\,)}{\p (z, \bar{z})}
= \Big| \frac{\p \eta}{\p z} \Big|^2 - \Big| \frac{\p \eta}{\p \bar{z}} \Big|^2,
\end{align*}
thus $\p \eta/\p z$ will not vanish at any point. Thereby, it follows that
$|\p \eta/\p \bar{z}|^2 = 0$,
as we had to prove. 
\end{proof}		

   Theorem~\ref{thm:Main} follows immediately from Proposition~\ref{pro:ContractibleD} and the following Lemma~\ref{le:R+drC}. 
    
    \begin{lem} \label{le:R+drC}
    	Let $\mathfrak{M}^{+}(\overline{D})$ be as in the statement of Theorem~\ref{thm:Main}. Then it is homotopy equivalent to $\mathfrak{C}(\overline{D})$.
    \end{lem}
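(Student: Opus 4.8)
The plan is to build an explicit deformation retraction of $\mathfrak{M}^+(\overline{D})$ onto a subspace homeomorphic to $\mathfrak{C}(\overline{D})$, using the uniformization data packaged by Proposition~\ref{pro:homeo}. The guiding idea: every $g\in\mathfrak{R}(\overline{D})$ decomposes, via $\phi=\Phi([g])\in\Diff^+_{\bullet}(\overline{D})$, as $g=\phi^*(e^{2u}g_0)$ for a \emph{canonically determined} pair $(\phi,u)$ — here $u$ is forced once $\phi$ is fixed, since $\phi^{-1}$ pulls $g$ back to a metric pointwise conformal to $g_0$. This gives continuous maps $g\mapsto \Phi([g])=:\phi_g$ and $g\mapsto u_g\in C^\infty(\overline{D})$ with $g=\phi_g^*(e^{2u_g}g_0)$. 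Since $\mathfrak{M}^+(\overline{D})$ is $\Diff$-invariant (assumption (2)), $g\in\mathfrak{M}^+(\overline{D})$ iff $e^{2u_g}g_0\in\mathfrak{M}^+(\overline{D})$, i.e. iff $e^{2u_g}g_0$ lies in the conformal fiber $\mathfrak{M}^+(\overline{D})\cap\pi^{-1}([g_0])$, which by assumption (1) is convex in the $u$-variable.

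First I would make this precise. Set $\mathfrak{N}:=\mathfrak{M}^+(\overline{D})\cap\pi^{-1}([g_0])=\{e^{2v}g_0: v\in U\}$ where $U\subset C^\infty(\overline{D})$ is (by (1)) a convex set, nonempty since $\mathfrak{M}^+(\overline{D})\neq\emptyset$ and is $\Diff$-invariant so some $\phi^*g\in\mathfrak{N}$. Then I claim $\mathfrak{M}^+(\overline{D})$ fibers over $\mathfrak{C}(\overline{D})$: the map $g\mapsto([g],u_g)$ identifies $\mathfrak{M}^+(\overline{D})$ homeomorphically with $\{([g],u): \phi_g^*(e^{2u}g_0)\in\mathfrak{M}^+,\ u\in C^\infty(\overline{D})\}$, and by $\Diff$-invariance the condition on $u$ is exactly $u\in U$, \emph{independently of $[g]$}. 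So $\mathfrak{M}^+(\overline{D})\cong \mathfrak{C}(\overline{D})\times U$ via $g\mapsto([g],u_g)$, with inverse $([g],u)\mapsto \Phi([g])^*(e^{2u}g_0)$; continuity of the inverse follows from continuity of $\Phi$ (Proposition~\ref{pro:homeo}) and of the map $(\phi,u,\sigma)\mapsto\phi^*(e^{2u}\sigma)$ in the smooth topologies. Since $U$ is convex and nonempty, it is contractible, and thus the projection $\mathfrak{M}^+(\overline{D})\to\mathfrak{C}(\overline{D})$ is a homotopy equivalence — indeed $\mathfrak{C}(\overline{D})\times U$ deformation retracts onto $\mathfrak{C}(\overline{D})\times\{v_0\}$ for any $v_0\in U$ by the straight-line homotopy $H_t([g],v)=([g],(1-t)v+t v_0)$, which is valid since $U$ is convex.

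The delicate point — and the one I would spend the most care on — is the claim that the $u$-fiber of $\mathfrak{M}^+(\overline{D})$ over $[g]$ is genuinely \emph{independent} of $[g]$, i.e. that it agrees with $U$ for \emph{every} conformal class, not just for $[g_0]$. This is where assumption (2) does the real work: given any $[g]$ and any $v\in C^\infty(\overline{D})$, the metric $\Phi([g])^*(e^{2v}g_0)$ lies in $\mathfrak{M}^+(\overline{D})$ iff $e^{2v}g_0$ does (apply invariance under $\phi=\Phi([g])$ and $\phi^{-1}$), iff $v\in U$. One must also check measurability/smoothness issues are absent — $\Phi([g])$ is an honest diffeomorphism smooth up to the boundary (Ahlfors--Bers), so pullback preserves $\mathfrak{R}(\overline{D})$ and the conformal structure. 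A secondary technical point is the continuity of $g\mapsto u_g$: this follows because $u_g$ is recovered from $g$ and $\phi_g$ by $e^{2u_g}=(\phi_g^{-1})^*g / g_0$ (the conformal factor comparing the pulled-back metric to $g_0$), and all operations involved — inverting $\phi_g$, pulling back, taking the conformal ratio — are continuous in the smooth topology, with $g\mapsto\phi_g=\Phi([g])=\Phi(\pi(g))$ continuous by Proposition~\ref{pro:homeo} composed with the (continuous) quotient map $\pi$.

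Finally I would assemble: $\mathfrak{M}^+(\overline{D})\cong\mathfrak{C}(\overline{D})\times U$ with $U$ convex nonempty, hence the first-factor projection is a homotopy equivalence $\mathfrak{M}^+(\overline{D})\simeq\mathfrak{C}(\overline{D})$, which is the statement of Lemma~\ref{le:R+drC}. (Combined with Proposition~\ref{pro:ContractibleD}, this yields Theorem~\ref{thm:Main} as already noted in the text.) I expect no essential analytic obstacle beyond bookkeeping with the smooth topology; the one conceptual subtlety worth flagging in the write-up is precisely the interplay of (1) and (2) — convexity gives contractibility of the fiber, but only $\Diff$-invariance guarantees that this fiber is the \emph{same} over every point of the (contractible) base, which is what makes the product structure, and hence the homotopy equivalence, go through.
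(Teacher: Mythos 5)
Your argument is correct and follows essentially the same route as the paper: both use the canonical uniformizing diffeomorphism $\Phi([g])$ from Proposition~\ref{pro:homeo} to split off the conformal factor and then contract that factor by a straight-line homotopy, with assumption (2) transporting membership in $\mathfrak{M}^+(\overline{D})$ across conformal classes and assumption (1) supplying the needed convexity. The only cosmetic difference is that you package this as a global trivialization $\mathfrak{M}^+(\overline{D})\cong\mathfrak{C}(\overline{D})\times U$ with $U$ convex, whereas the paper phrases the same mechanism as a section $\sigma$ of $\pi$ together with the homotopy $\sigma\circ\pi\simeq\mathds{1}_{\mathfrak{M}^{+}(\overline{D})}$.
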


    \begin{proof}
    	Let $g_+\in \mathfrak{M}^{+}(\overline{D})$ (the set is assumed to be non-empty). By virtue of Theorem \ref{th:Ufz-S2}, we can write
    	$g_+=\phi_+^*(e^{2u_0}g_0)$ for some $\phi_+\in \text{Diff}^+(\overline{D})$, hence by our assumption (2) on the set $\mathfrak{M}^{+}(\overline{D})$ it must be that $g_D:=e^{2u_0}g_0\in\mathfrak{M}^{+}(\overline{D})$ as well.

    	Let $\pi: \mathfrak{M}^{+}(\overline{D}) \to \mathfrak{C}(\overline{D})$ denote the restriction of the projection map, i.e. $\pi(g)=[g]$.
    	We further define the map $\sigma: \mathfrak{C}(\overline{D}) \to \mathfrak{M}^{+}(\overline{D})$ by factoring through $\Diff^+_{\bullet}(\overline{D})$ as shown in the diagram 
    	\[
    	\begin{tikzcd}
    	\mathfrak{C}(\overline{D}) \arrow{r}{\sigma} \arrow{dr}{\Phi} & \mathfrak{M}^{+}(\overline{D}) \\
    	& \Diff^+_{\bullet}(\overline{D}) \arrow{u},
    	\end{tikzcd}
    	\]
    	where $\Phi$ is the map defined above.
    	Indeed, given $[g] \in \mathfrak{C}(\overline{D})$, there exists a unique map $\Phi([g]) = \phi \in \Diff^+_{\bullet}(\overline{D})$ such that 
    	\[
    	[g] = [\phi^*g_0] = [ \Phi([g])^* g_0].
    	\]
    	Then we define
    	\[
    	\sigma ([g]) = \phi^*g_D = \Phi([g])^* g_D,
    	\]
    	where it should be noted that $\sigma ([g]) \in \mathfrak{M}^{+}(\overline{D})$ by diffeomorphism invariance (assumption (2) in Theorem \ref{thm:Main}).
    	Clearly $\pi$ is continuous; the map $\sigma$ is also continuous, as $\Phi$ is, by virtue of Proposition \ref{pro:homeo}.
    	
    	\
    	
    	Note that $\pi \circ \sigma = \mathds{1}_{\mathfrak{C}}$, for indeed
    	$
    	\pi \circ \sigma ([g]) = \pi \big((\Phi([g])^* g_D\big) = [ \Phi([g])^*g_D]=[ \Phi([g])^*g_0] = [g].
    	$
    	Next, we will show that $\sigma \circ \pi \simeq \mathds{1}_{\mathfrak{M}^{+}(\overline{D})}$, where $\simeq$ stands for the homotopy relation. Define a map $H : [0, 1] \times \mathfrak{M}^{+}(\overline{D}) \to \mathfrak{M}^+(\overline{D})$ as follows. For each metric $g$ in $\mathfrak{M}^{+}(\overline{D})$, 
    	we can write $g = e^{2u} \Phi([g])^*g_D$ for some $u \in C^{\infty}(\overline{D})$ which is uniquely determined by $g$, and continuously depending on $g$. We then set
    	\[
    	H(t, g) = e^{2(1-t)u} \Phi([g])^* g_D.
    	\]
    	Then $H$ is well-defined, by assumption (1) on the set $\mathfrak{M}^{+}(\overline{D})$, and continuous. Note that
    	\begin{align*}
    	H(0, g) & = e^{2u} \Phi([g])^*g_D = g, \\
    	H(1, g) & = \Phi([g])^*g_D = \sigma \circ \pi (g).
    	\end{align*}
    	This proves that $\sigma \circ \pi \simeq \mathds{1}_{\mathfrak{M}^{+}(\overline{D})}$. Hence, $\mathfrak{M}^{+}(\overline{D})$ is homotopy equivalent to $\mathfrak{C}(\overline{D})$. 
    \end{proof}

\begin{rmk}The conclusion of Theorem \ref{thm:Main} continues to hold if assumption (1) is relaxed to only require \emph{one} fiber to be \emph{star-shaped}. More precisely, let $\mathfrak{M}^+(\overline{D})$ satisfy condition (2) in that statement, together with the following assumption: there exists a metric $\bar{g} \in \mathfrak{M}^+(\overline{D})$ such that $e^{2tu} \bar{g} \in \mathfrak{M}^+(\overline{D}) \cap \pi^{-1}([\overline{g}])$ for all $0 \le t \le 1$, whenever $g_1 = e^{2u} \bar{g} \in \mathfrak{M}^+(\overline{D}) \cap \pi^{-1}([\bar{g}])$. Then, modifying the proof of \ref{le:R+drC} above by letting $g_+$ be $\bar{g}$ one can still conclude that $\mathfrak{M}^+(\overline{D})$ is contractible. 
	\end{rmk}

  \appendix 
    
  \section{Contractibility of the space of conformal classes} \label{app:ConfClass}

    \begin{lem} \label{le:CM-c}
    	Let $n\geq 2$ and let $M$ be an $n$-dimensional oriented manifold (possibly with non-empty boundary). Then, $\mathfrak{C}(M)$ is contractible.
    \end{lem}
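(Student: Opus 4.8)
The plan is to realize $\mathfrak{C}(M)$ as a continuous retract of the (evidently contractible) space $\mathfrak{R}(M)$ of all Riemannian metrics on $M$, and then to produce a contraction of $\mathfrak{C}(M)$ by pushing forward, via the projection $\pi$, a contraction of $\mathfrak{R}(M)$. The point of departure is that $\mathfrak{R}(M)$ is a convex subset of the space of smooth symmetric $(0,2)$-tensor fields on $M$ (positive-definiteness being stable under convex combinations), a topological vector space for the relevant smooth topology. Hence, fixing any background metric $g_0\in\mathfrak{R}(M)$, the affine homotopy $H(t,g):=(1-t)g+t g_0$ takes values in $\mathfrak{R}(M)$ and contracts it to $g_0$. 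This is the higher-dimensional substitute for the convexity of $C^{\infty}(\overline{D},D)$ exploited in Proposition~\ref{pro:ContractibleD}.

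The second ingredient is a continuous section $s\colon\mathfrak{C}(M)\to\mathfrak{R}(M)$ of $\pi$, obtained by volume normalization. Keeping $g_0$ fixed, denote by $dV_0$ its Riemannian volume form and, for $g\in\mathfrak{R}(M)$, set $w_g:=\tfrac1n\log(dV_0/dV_g)\in C^{\infty}(M)$ and $\widehat g:=e^{2w_g}g$; then $\widehat g\in[g]$ and $dV_{\widehat g}=dV_0$. A one-line computation shows that if $g'=e^{2v}g$ then $w_{g'}=w_g-v$, so that $\widehat{g'}=\widehat g$; thus $g\mapsto\widehat g$ descends to a map $s\colon\mathfrak{C}(M)\to\mathfrak{R}(M)$, $s([g]):=\widehat g$, which by construction satisfies $\pi\circ s=\mathds{1}_{\mathfrak{C}(M)}$. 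For continuity: the map $\widetilde s\colon g\mapsto\widehat g$ is continuous for the smooth topology (it is assembled from $g$ through the volume form and pointwise algebraic operations), and $\widetilde s=s\circ\pi$; since $\pi$ is by definition a quotient map, $s$ is continuous.

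With these two ingredients I would simply define $K\colon[0,1]\times\mathfrak{C}(M)\to\mathfrak{C}(M)$ by $K(t,[g]):=\pi\bigl(H(t,s([g]))\bigr)$. This is continuous, and $K(0,[g])=\pi(s([g]))=[g]$ while $K(1,[g])=\pi(g_0)=[g_0]$ for every $[g]$, so $\mathfrak{C}(M)$ is contractible. The only step that I expect to require genuine care — especially when $M$ is noncompact — is the topological bookkeeping: one must be sure that $\mathfrak{R}(M)$ really is a convex subset of a topological vector space on which $H$ is jointly continuous (e.g. by working with the weak $C^{\infty}$-topology), and that $\widetilde s$ is continuous so that the quotient property of $\pi$ actually delivers the continuity of $s$. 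Everything beyond this is elementary. As an alternative one could phrase the argument bundle-theoretically, identifying $\mathfrak{C}(M)$ with the space of sections of the bundle over $M$ whose fibre is the contractible, nonpositively curved symmetric space $\mathrm{SL}(n,\mathbb{R})/\mathrm{SO}(n)$ and contracting fibrewise along geodesics; but the volume-normalized section above makes this detour unnecessary and keeps the proof self-contained.
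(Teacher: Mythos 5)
Your proof is correct and is essentially the paper's own argument: your volume-normalized representative $\widehat g=e^{2w_g}g=\bigl(\mu(g_0)/\mu(g)\bigr)^{2/n}g$ is exactly the normalization used in the paper, and your homotopy $K(t,[g])=\pi\bigl((1-t)\,\widehat g+t\,g_0\bigr)$ coincides with the paper's map $H$, the only difference being that you package the well-definedness check as an explicit section $s$ of $\pi$ rather than verifying directly that $H$ descends to the quotient.
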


    \begin{proof}
    	Fixed any $g_0\in\mathfrak{R}(M)$, define a continuous map $H: [0, 1] \times \mathfrak{R}(M) \to \mathfrak{C}(M)$ by
    	\[
    	H(t, g) = \pi \bigg( (1 - t) \bigg(\frac{\mu(g_0)}{\mu (g)}\bigg)^{2/n} g + t g_0 \bigg).
    	\]
    	Here $\mu(g)$ (resp. $\mu(g_0)$) stands for the volume element of $g$ (resp. $g_0$); in terms of local oriented coordinates $(x^1,\ldots, x^n)$ centered at an interior point of $M$
    	we have
    	\[
    	\mu(g) = \sqrt{ \det (g_{ij})} \, dx^1 \wedge \cdots \wedge dx^n.
    	\]
    	Note that the ratio
    	\[
    	\frac{\mu(g_0)}{\mu(g)} := \frac{\sqrt{\det (g_{0, ij})}}{\sqrt{\det (g_{ij})}}
    	\]
    	is a globally-defined, smooth positive function on $M$; furthermore,  if $M$ has non-empty boundary, then $\mu (g_0) /\mu(g)$ extends continuously up to $\p M$.

    	We claim that $H$ descends to a continuous map from $[0, 1] \times \mathfrak{C}(M)$ to $\mathfrak{C}(M)$. Indeed,  if $\tilde{g} = e^{2u} g$ then
    	\[
    	\det (\tilde{g}_{ij}) = e^{2n u} \det (g_{ij}); \quad \textup{thus}, \quad \frac{\mu(\tilde{g})}{\mu(g_0)} = e^{nu} \frac{\mu(g)}{\mu(g_0)}.
    	\]
    	It follows that
    	\[
    	\bigg(\frac{\mu(g_0)}{\mu(\tilde{g})}\bigg)^{2/n} \tilde{g} = \bigg(\frac{\mu(g_0)}{\mu(g)}\bigg)^{2/n} g,
    	\]
    	hence $H(t, \tilde{g}) = H(t, g)$. The continuity of $H$ on $\mathfrak{C}(M)$ follows immediately from the definition of quotient topology; since $H(0, [g]) = [g]$ and $H(1, [g]) = [g_0]$ we conclude that $\mathfrak{C}(M)$ is homotopy equivalent to $[g_0]$.
    \end{proof}

\bibliographystyle{amsbook}

\end{spacing}

\end{document}